\documentclass[a4paper,12pt]{amsart}
\usepackage{}
\usepackage{mathrsfs}

\usepackage{amssymb}
\usepackage{latexsym}
\usepackage{amsfonts}
\usepackage{amsmath}
\usepackage{eucal}
\usepackage{bm}
\usepackage{bbm}
\usepackage{graphicx}
\usepackage[english]{varioref}
\usepackage[nice]{nicefrac}
\usepackage[all]{xy}
\usepackage{amsthm}

\newcommand{\supp}{\text {\rm supp}}

\newcommand{\Hom}{{\rm Hom}}

\newcommand{\res}{\text {\rm res}}
\newcommand{\pt}{\text {\rm pt}}

\def\i{^{-1}}

\def\<{\langle}
\def\>{\rangle}

\def\||{\parallel}
\def\d{\text{d}}

\def\a{\alpha}
\def\b{\beta}

\def\d{\delta}
\def\D{\Delta}

\def\th{\theta}

\def\l{\lambda}

\def\Om{\Omega}

\def\ZZ{\mathbb Z}
\def\NN{\mathbb N}

\def\CC{\mathbb C}

\def\cb{\mathcal B}

\def\cd{\mathcal D}

\def\cf{\mathcal F}

\def\cO{\mathcal O}
\def\cp{\mathcal P}

\def\tc{\tilde c}

\def\tW{\tilde W}

\def\tB{\tilde B}
\def\tG{\tilde G}
\def\tT{\tilde T}
\def\tX{\tilde X}

\def\fF{\mathfrak F}

\theoremstyle{plain}
\newtheorem{thm}{Theorem}[section]
\newtheorem*{thm*}{Theorem}
 \newtheorem{prop}[thm]{Proposition}
 \newtheorem{lem}[thm]{Lemma}
 \newtheorem{cor}[thm]{Corollary}

\theoremstyle{definition}

\theoremstyle{remark}

\newtheorem*{claim*}{Claim}

\begin{document}

\author{Sian Nie}
\address{}
\email{}
\title[]{The convolution algebra structure on $K^G(\cb \times \cb)$}
\keywords{}

\begin{abstract}
We show that the convolution algebra $K^G(\cb \times \cb)$ is isomorphic to the Based ring of the lowest two-sided cell of the  extended affine Weyl group associated to $G$, where $G$ is a connected reductive algebraic group over the field $\CC$ of complex numbers and $\cb$ is the flag variety of $G$.
\end{abstract}

\maketitle

\section*{Introduction}

We are interested in understanding the equivariant group $K^G(\cb \times \cb)$, where $G$ is a connected reductive algebraic group over $\CC$ and $\cb$ is the flag variety of $G$.

When $G$ has simply connected derived subgroup, the K\"{u}nneth formula $K^G(\cb \times \cb) \simeq K^G(\cb) \otimes_{R_G} K^G(\cb)$ is proved in Proposition 1.6 of \cite {KL} and plays an important role in Kazhdan-Lusztig's proof of Delinge-Langlands conjecture for affine Hecke algebra associated to $G$, where $R_G$ denotes the representation ring of $G$. Furthermore, by Theorem 1.10 of \cite{Xi}, the convolution algebra structure on $K^G(\cb \times \cb)$ is isomorphic to the based ring of the lowest two-sided cell of the extended affine Weyl group associated to $G$.

In general, $K^G(\cb \times \cb)$ is not isomorphic to $K^G(\cb) \otimes_{R_G} K^G(\cb)$. To set a Deligne-Langlands-Lusztig classification for affine Hecke algebra associated to $G$, it seems useful to understand the equivariant $K$-groups $K^G(\cb \times \cb)$. The main result of this paper is Theorem \ref {main}, which says that the convolution algebra on $K^G(\cb \times \cb)$ is isomorphic to the based ring of the lowest two-sided cell of the extended affine Weyl group associated to $G$. Since the based ring is known explicitly in \cite{Xi}, the main result gives an explicit description to the equivariant $K$-group $K^G(\cb \times \cb)$.

\section{Preliminary}

\subsection{}
Let $G$ be a connected  reductive algebraic group over $\CC$, $B$ a Berol subgroup of $G$ and $T$ a maximal torus of $G$, such that $T \subset B$. The Weyl group $W_0=N_G(T)/T$ of $G$ acts on the character group $X=\Hom(T,\CC^\ast)$ of $T$. Using this action we define the extended affine Weyl group $W=X \rtimes W_0$.

By classification theorem for connected reductive algebraic groups, there exists a connected reductive algebraic group $\tG$ with simply connected derived subgroup such that $G$ is a quotient group of $\tG$ modulo a finite subgroup of the center of $\tG$. Denote by $\pi: \tG \rightarrow G$ the quotient homomorphism. Set $\tB=\pi \i (B)$, $\tT=\pi \i (T)$, $\tX=\Hom(\tT,\CC^\ast)$ and $\tW=\tX \rtimes W_0$. Note that $X$ is naturally a subgroup of $\tX$ of finite index, hence $W$ is a naturally subgroup of $\tW$ of finite index.

Let $R \subset X$ be the root of $G$ and $\tG$. Let $R^- \subset R$ to be the set of negative roots determined by $B$. Set $R^+ = R - R^-$. Let $\D \subset R^+$ be set of simple positive roots.

Denote by $\l_\a$ the dominant fundamental weight corresponding to a simple positive root $\a \in R^+$. For any $w \in W_0$, define $x_w=w \i (\prod_{\a \in \D, w \i (\a)<0}\l_\a) \in \tX$. It is known that $\ZZ[\tX]$ is a free $\ZZ[X]^{W_0}$-module with a basis $\{x_w | w \in W_0\}$.

Let $\ell: \tW \rightarrow \NN$ be the length function. Note that $\ell(w\l)=\ell(w)+\ell(\l)$ for any $w \in W_0$ and any dominant weight $\l \in \tX$. Also we have $\ell(\l_\a s_\a)=\ell(\l_\a)-1$ for any positive simple root $\a \in \D$.

\subsection{}
Let $\Sigma=\{wx_w | w \in W_0\}$. Then the lowest two-sided cell $\tc_0$ of $\tW$ consists of elements $f \i w_0 \chi g$ with $f,g \in \Sigma$ and $\chi \in \tX^+$. (See \cite {Shi}) Here $w_0$ is the longest element of $W_0$ and $\tX^+$ is the set of dominant weights of $\tX$.  The lowest two-sided cell of $W$ is $c_0=\tc_0 \cap W$. The ring structure of $J_{\tc_0}$ of $\tc_0$ is defined in \S2 of \cite{L1} and explicitly determined in \S4 \cite{Xi}. As a $\ZZ$-module, it is free with a basis $t_z, z \in \tc_0$. The based ring $J_{c_0}$ of $c_0$ is a subring of $J_{\tc_0}$ spanned by all $t_z, z \in c_0$.

\subsection{}
For an algebraic group $M$ over $\CC$ and an variety $Z$ over $\CC$ which admits an algebraic action of $M$, denote by $K^M(Z)$ the Grothendieck group of $M$-equivariant coherent sheaves on $Z$. We refer to Chapter 5 of \cite{CG} for more about the equivariant $K$-group $K^M(Z)$.

There is a natural map $L: \ZZ[\tX] \rightarrow K^{\tG}(\cb)$ which associates $\chi \in \tX$ to the unique equivariant line bundle $[L(\chi)]$ on $\cb$ such that $\tT$ acts on the fibre $L(\chi)_B$ over $B \in \cb$ via $\chi$. Here $\cb=\tG / \tB=G /B$ is the flag variety. It is well known that $L$ is an isomorphism. By abuse of notation, we will use $\chi$ and $[L(\chi)]$ interchangeably in the following context.

The convolution on $K^G(\cb \times \cb)$ is defined by $$\fF \ast \fF'={Rp_{13}}_*(p^*_{12}\fF \bigotimes_{\cO_{\cb \times \cb \times \cb}}^L p^*_{23}\fF'),$$ where $\fF, \fF' \in K^G(\cb \times \cb)$ and $p_{12}, p_{13}, p_{23}: \cb \times \cb \times \cb \rightarrow \cb \times \cb$ are obvious natural projections. Identifying $K^{\tG}(\cb \times \cb)$ with $K^{\tG}(\cb) \otimes_{R_{\tG}} K^{\tG}(\cb) \simeq \ZZ[\tX] \otimes_{\ZZ[\tX]^{W_0}} \ZZ[\tX]$, the convolution becomes $$(\chi_1 \otimes \chi_2) \ast (\chi_1' \otimes \chi_2')=(\chi_2,\chi_1')\chi_1 \otimes \chi_2',$$ where $(,): \ZZ[\tX] \otimes_{\ZZ[\tX]^{W_0}}\ZZ[\tX] \rightarrow \ZZ[\tX]^{W_0}$ is given by $$(\chi_2, \chi_1')=\d \i \sum_{w \in W_0}(-1)^{\ell(w)}w(\chi_2 \rho \chi_1').$$ Here $\d=\prod_{\a \in R^+}(\a^{\frac{1}{2}}-\a^{-\frac{1}{2}})$ and $\rho=\prod_{\a \in R^+}\a^{\frac{1}{2}}$.

For $f=wx_w \in \Sigma$, set $x_f=x_w$. Since $(,)$ is a perfect pairing (See Proposition 1.6 in \cite{KL}), we can find $y_f \in \ZZ[\tX]$ such that $(x_f, y_f)=\d_{f,f'}$. The following result is due to N. Xi. (See Theorem 1.10 in \cite{Xi}.)\\

$(*)$ {\it The map $\sigma: J_{\tc_0} \rightarrow K^{\tG}(\cb \times \cb) \simeq \ZZ[\tX] \otimes_{\ZZ[\tX]^{W_0}} \ZZ[\tX]$ given by $t_{f \i w_0 \chi f'} \mapsto V(\chi) y_f \otimes x_{f'}$ for $\chi \in \tX^+$ and $f,f' \in \Sigma$ is an isomorphism of $R_{\tG}$-algebras. Here $V(\chi) \in R_{\tG}$ stands for the irreducible $\tG$-module with highest weight $\chi$.}\\

Now we state the main result of this paper.
\begin{thm}\label{main}
(a) The natural map $i: K^G(\cb \times \cb) \rightarrow K^{\tG}(\cb \times \cb)$ is an injective of homomorphism of algebra.\\
(b) As a $\ZZ$-module, the image of $i$ is spanned by $\{V(\chi)y_f \otimes x_{f'}; \chi \in \tX^+, f,f' \in \Sigma, f \i w_0 \chi f' \in W\}$.\\
(c) In particular, via the isomorphism $\sigma$ in (*), $J_{c_0}$ is isomorphic to the convolution algebra $K^G(\cb \times \cb)$ as $R_G$-algebras.
\end{thm}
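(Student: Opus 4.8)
The plan is to exploit that $G = \tG/\mu$ for the finite central subgroup $\mu := \ker(\pi)\subseteq Z(\tG)$, which acts trivially on $\cb = \tG/\tB$ and hence on $\cb\times\cb$ and on $\cb\times\cb\times\cb$. For part~(a): since $\mu$ is a finite abelian group and the ground field is $\CC$, every $\tG$-equivariant coherent sheaf $\cf$ on a variety with trivial $\mu$-action decomposes canonically as $\cf = \bigoplus_{\psi\in\widehat{\mu}}\cf_\psi$, where $\widehat{\mu} = \Hom(\mu,\CC^\ast)$ and $\mu$ acts on $\cf_\psi$ through $\psi$; each $\cf_\psi$ is again $\tG$-stable because $\mu$ is central, and $\cf_\psi$ is $G$-equivariant precisely when $\psi$ is trivial, so that ``regard a $G$-sheaf as a $\tG$-sheaf'' is an equivalence onto the summand $\psi=1$. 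Passing to $K$-groups gives $K^{\tG}(\cb\times\cb) = \bigoplus_{\psi}K^{\tG}(\cb\times\cb)_\psi$ and identifies $K^G(\cb\times\cb)$ with the summand $K^{\tG}(\cb\times\cb)_0$ on which $\mu$ acts trivially; thus $i$ is the inclusion of a direct summand and so is injective. Since the convolution $\ast$ is assembled from the $\mu$-equivariant operations $p_{ij}^\ast$, $\otimes^L$ and $Rp_{13\ast}$, the $\widehat\mu$-grading is multiplicative, so that summand is a subalgebra and $i$ is an algebra homomorphism. This is~(a).

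I next transport this grading through the isomorphism $K^{\tG}(\cb\times\cb)\simeq \ZZ[\tX]\otimes_{\ZZ[\tX]^{W_0}}\ZZ[\tX]$ of $(*)$. Restriction of characters identifies $\widehat\mu$ with $\tX/X$, and $\mu$ acts on $[L(\chi)]$ through $\chi|_\mu$; since $K^{\tG}(\cb\times\cb)\simeq K^{\tG}(\cb)\otimes_{R_{\tG}}K^{\tG}(\cb)$ is induced by the $\mu$-equivariant maps $p_i^\ast$ and the tensor product, the $\widehat\mu$-grading becomes the ``total degree'' grading on $\ZZ[\tX]\otimes_{\ZZ[\tX]^{W_0}}\ZZ[\tX]$ coming from the surjection $\tX\twoheadrightarrow\tX/X$; note that $W_0$ acts trivially on $\tX/X$, so $\ZZ[\tX]^{W_0}$ is graded too, with degree-$0$ part $R_G = \ZZ[X]^{W_0}$. (The degree-$0$ part of the whole tensor product is in general strictly larger than $\ZZ[X]\otimes_{\ZZ[X]^{W_0}}\ZZ[X]$, consistently with the remark in the Introduction.)

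For part~(b), by $(*)$ the elements $V(\chi)\,y_f\otimes x_{f'}$ with $\chi\in\tX^+$ and $f,f'\in\Sigma$ form a $\ZZ$-basis of $K^{\tG}(\cb\times\cb)$, so it suffices to show that each is homogeneous of $\tX/X$-degree equal to the image of $f\i w_0\chi f'$ under the group homomorphism $\tW = \tX\rtimes W_0\to\tX/X$ whose kernel is $W$; granting this, the degree-$0$ part $K^{\tG}(\cb\times\cb)_0 = i(K^G(\cb\times\cb))$ is spanned exactly by those $V(\chi)\,y_f\otimes x_{f'}$ with $f\i w_0\chi f'\in W$. Write $f = wx_w$ and $f' = w'x_{w'}$. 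The weights of the irreducible module $V(\chi)$ all lie in $\chi + \ZZ R$ and $\ZZ R\subseteq X$, so $V(\chi)$ is homogeneous of degree $\ov\chi$; likewise $x_{f'} = x_{w'}$ is homogeneous of degree $\ov{x_{w'}}$. For $y_f$ I use that $(\,,\,)$ is graded of degree $0$: in $(\chi_2,\chi_1') = \d\i\sum_{v\in W_0}(-1)^{\ell(v)}v(\chi_2\rho\chi_1')$ every weight of $\d$ and of $\rho$ lies in the single coset $\rho + \ZZ R$, and $W_0$ fixes $\tX/\ZZ R$, so all weights of $(\chi_2,\chi_1')$ lie in $\chi_2 + \chi_1' + \ZZ R$; hence $(\,,\,)$ sends $\ZZ[\tX]_a\otimes\ZZ[\tX]_b$ into $(\ZZ[\tX]^{W_0})_{a+b}$, and since it is perfect and $\{x_f : f\in\Sigma\}$ is a homogeneous $\ZZ[\tX]^{W_0}$-basis of $\ZZ[\tX]$, the dual basis $\{y_f\}$ is homogeneous with $\deg y_f = -\deg x_f = -\ov{x_w}$. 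Therefore $V(\chi)\,y_f\otimes x_{f'}$ is homogeneous of degree $\ov\chi - \ov{x_w} + \ov{x_{w'}}$. On the other hand, multiplying out in $\tX\rtimes W_0$ shows the translation part of $f\i w_0\chi f'$ has $\tX/X$-image $-\ov{x_w} + \ov\chi + \ov{x_{w'}}$ --- here one uses that $W_0$ acts trivially on $\tX/X$, so neither the Weyl-group rotations occurring in the product nor the passage from a weight to its $W_0$-translates changes the degree --- and $f\i w_0\chi f'$ lies in $W$ exactly when this image vanishes. The two degrees coincide, which proves~(b).

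Part~(c) is then formal: $\sigma$ is an isomorphism of $R_{\tG}$-algebras, hence of $R_G$-algebras since $R_G = (R_{\tG})_0\subseteq R_{\tG}$; by~(b) it carries the $\ZZ$-submodule $J_{c_0} = \bigoplus_{z\in c_0}\ZZ t_z$ onto $i(K^G(\cb\times\cb))$; and since $J_{c_0}$ is a subring of $J_{\tc_0}$ while $i(K^G(\cb\times\cb))$ is a subalgebra of $K^{\tG}(\cb\times\cb)$ by~(a), $\sigma$ restricts to an isomorphism of $R_G$-algebras $J_{c_0}\xrightarrow{\ \sim\ }K^G(\cb\times\cb)$. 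The crux --- and the step I expect to require the most care --- is part~(b): installing the $\widehat\mu\simeq\tX/X$-grading compatibly on every object occurring in $(*)$ and verifying the degree identity, the technical heart being that $(\,,\,)$ is graded (so that the $y_f$ are homogeneous) and that the degree of $V(\chi)\,y_f\otimes x_{f'}$ records exactly the obstruction to $f\i w_0\chi f'$ lying in $W$; parts~(a) and~(c) are then essentially formal consequences of $G = \tG/\mu$ with $\mu$ finite and central acting trivially on $\cb$.
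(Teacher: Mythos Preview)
Your argument is correct and takes a genuinely different route from the paper's. The paper proves (a) by stratifying $\cb\times\cb$ into $G$-orbits $Y_w$ and using the long exact sequences for the filtration $F_i$ to reduce injectivity to the elementary inclusion $\ZZ[X]\hookrightarrow\ZZ[\tX]$ on each stratum; it then establishes (b) via a geometric membership criterion (Proposition~\ref{criterion}: $l\in K^G(\cb\times\cb)$ iff $Lj_w^*(l)\in K^G(Y_w)$ for every $w$), combined with Lemma~\ref{lem} showing $y_f\in\ZZ[x_f^{-1}X]$, and finishes the reverse inclusion with a convolution trick, sandwiching a general element between $y_f\otimes x_f$ and $y_{f'}\otimes x_{f'}$ to isolate coefficients. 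Your approach instead observes once and for all that $K^{\tG}(\cb\times\cb)$ carries a $\widehat\mu\simeq\tX/X$-grading with degree-$0$ part exactly $K^G(\cb\times\cb)$, and then checks that Xi's basis $V(\chi)\,y_f\otimes x_{f'}$ is homogeneous for this grading with the expected degree; both inclusions in (b) then fall out simultaneously, with no stratification and no convolution trick needed. Your Lemma~\ref{lem} analogue (homogeneity of $y_f$) is obtained from the graded perfectness of $(\,,\,)$, which is essentially the same computation as the paper's matrix argument but packaged more conceptually. What the paper's approach buys is a reusable geometric criterion (Proposition~\ref{criterion}) of independent interest; what your approach buys is a shorter and more structural proof of the theorem itself, making transparent that the passage from $\tG$ to $G$ is governed entirely by the finite abelian group $\tX/X$.
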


\section{Proof of Theorem \ref{main}}
\subsection{}\label{fact}
Set $\Om=\tW/W=\tX/X=\{\l X; \l \in \tX\}$, which is a finite abelian group. For a left coset $\l X$, let $\ZZ[\l X]$ be the $\ZZ$-submodule of the group algebra $\ZZ[\tX]$ spanned by elements in $\l X$. For any $A \in \ZZ[\l X]$ and $B \in \ZZ[\mu X]$, we have $AB \in \ZZ[\l \mu X]$. Moreover if there is $C \in \ZZ[\tX]$ such that $A=BC$, then $C \in \ZZ[\l \mu \i X]$.

\begin{lem}\label{lem}
For $f \in \Sigma$, we have $y_f \in \ZZ[x_f \i X]$.
\end{lem}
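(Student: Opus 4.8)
The plan is to pin down the coset to which $y_f$ belongs by exploiting the defining property $(x_f, y_{f'}) = \delta_{f,f'}$ together with the coset arithmetic recorded in \S\ref{fact}. First I would recall that $x_f = x_w = w^{-1}(\prod_{\a \in \D, w^{-1}(\a)<0} \l_\a)$ for $f = w x_w \in \Sigma$, and note that the pairing $(\chi_2, \chi_1') = \d^{-1}\sum_{w \in W_0}(-1)^{\ell(w)} w(\chi_2 \rho \chi_1')$ takes values in $\ZZ[X]^{W_0}$, in particular in $\ZZ[X]$. The key observation is that both $\d = \prod_{\a \in R^+}(\a^{1/2} - \a^{-1/2})$ and $\rho = \prod_{\a \in R^+}\a^{1/2}$, while not individually lying in $\ZZ[\tX]$, have the property that $\rho \cdot \d^{-1}$, and more relevantly the combination appearing after summing over $W_0$, is supported on $X$; indeed the whole expression $(\chi_2, \chi_1')$ lies in $\ZZ[X]$ whenever $\chi_2, \chi_1' \in \ZZ[\tX]$. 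Thus for $\chi_2 \in \mu X$ and $\chi_1' \in \nu X$ one gets $(\chi_2, \chi_1') \in \ZZ[X]$, which forces $\mu \nu X = X$, i.e. $\nu X = \mu^{-1} X$ — so the pairing is nonzero only between $\ZZ[\mu X]$ and $\ZZ[\mu^{-1}X]$.

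Next I would use the fact from \S1 that $\ZZ[\tX]$ is a free $\ZZ[X]^{W_0}$-module with basis $\{x_w \mid w \in W_0\}$; since these basis elements lie in distinct cosets is not quite automatic, so the substantive point is to check that the $x_w$ (equivalently the $x_f$, $f \in \Sigma$) are supported in $\Om = \tX/X$ in such a way that the pairing matrix $\big((x_f, y_{f'})\big)$ respects the coset grading. Concretely: write $y_f = \sum_{\mu X \in \Om} y_f^{\mu X}$ for its decomposition into coset components. From $(x_f, y_f) = 1 \in \ZZ[X]$ and the previous paragraph, only the component $y_f^{\mu X}$ with $\mu X = x_f^{-1}X$ can contribute; from $(x_{f}, y_{f'}) = 0$ for $f \neq f'$ I would need the $x_f$ to represent distinct cosets, or at least argue that the submodule spanned by $\{x_f \mid x_f \in \nu X\}$ pairs only with $\{y_f \mid x_f \in \nu X\}$. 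Granting this coset-graded block structure of the perfect pairing $(\,,\,) \colon \ZZ[\tX] \otimes_{\ZZ[X]^{W_0}} \ZZ[\tX] \to \ZZ[X]^{W_0}$, the dual basis element $y_f$ must itself be homogeneous of coset $x_f^{-1}X$, i.e. $y_f \in \ZZ[x_f^{-1}X]$, which is exactly the claim.

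I expect the main obstacle to be the bookkeeping in the second step: verifying that the pairing is genuinely \emph{graded} by the finite group $\Om$, which amounts to showing (i) the values $(\chi_2,\chi_1')$ lie in $\ZZ[X]$ for $\chi_i \in \tX$ — this should follow from $W$-invariance of the sum together with the observation that $\rho\d^{-1} = \prod_{\a \in R^+}(1 - \a^{-1})^{-1}$ expanded via the Weyl character / denominator formula produces only $X$-weights after antisymmetrization — and (ii) that the $x_f$ are linearly independent over $\ZZ[X]^{W_0}$ within each fixed coset. Point (i) is really the arithmetic heart and may require invoking that $X$ contains the root lattice so that $\a \in X$ for every root; point (ii) follows from the stated freeness of $\ZZ[\tX]$ over $\ZZ[X]^{W_0}$ once one checks the cosets $x_f X$ are distinct, which in turn can be read off from the explicit formula for $x_w$. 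Once both are in hand the conclusion is immediate from the definition of $y_f$ as the dual basis vector.
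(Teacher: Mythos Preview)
Your central claim (i) --- that the pairing $(\,,\,)$ takes values in $\ZZ[X]^{W_0}$ --- is false, and this is what your argument rests on. The pairing lands only in $\ZZ[\tX]^{W_0}$: already for $\tG=SL_2$, $G=PGL_2$ one computes $(1,\l)=\l+\l^{-1}\notin\ZZ[X]$, where $\l$ is the fundamental weight. So it is \emph{not} true that the pairing vanishes between $\ZZ[\mu X]$ and $\ZZ[\nu X]$ unless $\mu\nu\in X$; what is true (and immediate from the fact that $\rho$, $\d$, and the $W_0$-action all live in the trivial coset) is that the pairing sends $\ZZ[\mu X]\times\ZZ[\nu X]$ into $\ZZ[\mu\nu X]$. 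With this corrected grading your decomposition idea can be repaired: writing $y_f=\sum_\nu y_f^{\nu X}$, the relation $(x_{f'},y_f)=\d_{f,f'}\in\ZZ[X]$ together with the direct-sum decomposition $\ZZ[\tX]=\bigoplus_\mu\ZZ[\mu X]$ forces $(x_{f'},y_f^{\nu X})=0$ for every $f'\in\Sigma$ whenever $\nu X\neq x_f^{-1}X$, and then perfectness of the pairing kills $y_f^{\nu X}$. Note that no hypothesis about the $x_f$ lying in distinct cosets is needed (indeed they need not: for $G=\tG$ they all lie in $X$), so your point (ii) is a red herring.

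The paper takes a different, more computational route. It writes the dual basis explicitly as $y_f=\sum_{f'}A^{f,f'}x_{f'}$, where $(A^{f,f'})$ is the inverse of the Gram matrix $A_{f,f'}=(x_f,x_{f'})$. Since $A_{f,f'}\in\ZZ[x_f x_{f'}X]$ by the grading just described, Cramer's rule together with the coset arithmetic of \S\ref{fact} places $A^{f,f'}\in\ZZ[x_f^{-1}x_{f'}^{-1}X]$, and hence each summand $A^{f,f'}x_{f'}$ already lies in $\ZZ[x_f^{-1}X]$. This bypasses the perfectness-plus-projection step entirely and never needs to worry about which cosets the $x_f$ occupy.
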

\begin{proof}
For $f,f' \in \Sigma$, set $A_{f,f'}=(x_f, x_{f'})=\d \i \sum_{w \in W_0}(-1)^{\ell(w)}w(x_f \rho x_{f'})$ which lies in $ \ZZ[x_f x_{f'} X]$. Let $(A^{f,f'})_{(f,f') \in \Sigma \times \Sigma}$ be the inverse matrix of $(A_{f,f'})_{(f,f') \in \Sigma \times \Sigma}$. Then a direct computation shows $A^{f,f'} \in \ZZ[x_f \i x_{f'} \i X]$. Since $y_f=\sum_{f' \in \Sigma}A^{f,f'} x_{f'}$, we have $y_f \in \ZZ[x_f \i X]$.
\end{proof}

\subsection{}\label{strata}
For $w \in W_0$, let $Y_w \subset \cb \times \cb$ be the $G$-orbit containing $(B,wB)$. Then $\cb \times \cb=\coprod_{w \in W_0}Y_w$ and the projection to the first factor $p_1: Y_w \rightarrow \cb$ is an affine bundle of rank $\ell(w)$. Numbering the elements of $W_0$ as $u_1, u_2, \cdots, u_r$ such that $u_i \nless u_j$ if $j<i$. Let $F_i=\coprod_{j \leq i}Y_{u_j}$. Then $F_i$ is closed in $\cb \times \cb$. We have the following commutative diagram

\begin{equation*}
\xymatrix{
0 \ar[r] & K^G(F_{i-1}) \ar[d] \ar[r] & K^G(F_i) \ar[d] \ar[r] & K^G(Y_{u_i}) \ar[d] \ar[r] & 0\\
0 \ar[r] & K^{\tG}(F_{i-1}) \ar[r] & K^{\tG}(F_i) \ar[r] & K^{\tG}(Y_{u_i}) \ar[r] & 0 }
\end{equation*}

where all the morphisms are natural and obvious. By \cite{CG}, the rows of the diagram are exact sequences. Since $p_1: Y_{u_i} \rightarrow \cb$ is an affine bundle, we have the following commutative diagram
\begin{equation*}
\xymatrix{
K^G(Y_{u_i}) \ar[r]^\sim & K^G(\cb) \ar[r] \ar[d]_\wr & K^{\tG}(\cb) \ar[r]^\sim \ar[d]_\wr & K^{\tG}(Y_{u_i})\\
                             & \ZZ[X] \ar[r] & \ZZ[\tX] }
\end{equation*}
which shows that the natural morphism $K^G(Y_{u_i}) \rightarrow K^{\tG}(Y_{u_i})$ is injective. Using induction on $i$, we see that the natural morphism $i: K^G(\cb \times \cb) \rightarrow K^{\tG}(\cb \times \cb)$ is injective. One shows directly that $i$ is a homomorphism of convolution algebras. Part (a) of Theorem \ref{main} is proved.

\subsection{}
For $w \in W_0$, let $X_w=\tB w \tB/ \tB$ and $X^w = w B^+ \tB / \tB$, where $\tB^+ \supset \tT$ is the opposite of $\tB$. Then $X^w$ is an $\tT$-invariant open neighborhood of $X_w$ in $\cb$. Set $X_i=\coprod_{j \leq i}X_{u_j}$. Note that $\cb \times \cb = \tB \backslash (\tG \times \cb)$, where the action of $\tB$ on $\tG \times \cb$ is given by $b (g, h \tB)=(g b \i, bh \tB)$. Hence $K^{\tG}(\cb \times \cb) \simeq K^{\tB}(\cb) \simeq K^{\tT}(\cb)$. Similarly, $K^{\tG}(F_i) \simeq K^{\tT}(X_i)$ and $K^{\tG}(Y_w) \simeq K^{\tT}(X_w)$.

Let $j_w: Y_w \rightarrow \cb \times \cb$ be the natural $G$-equivariant inclusion. Since $X_w$ is a $\tT$-equivariant vector bundle over a single point. We have $K^{\tG}(Y_w) \simeq K^{\tT}(X_w) \simeq \ZZ[\tX]$. Then a direct computation shows that the induced homomorphism $Lj_w^*$ of equivariant $K$-groups is given by
\begin{align*}
Lj_w^*: K^{\tG}(\cb \times \cb)& \rightarrow  K^{\tG}(Y_w) \simeq  \ZZ[\tX],\\
             x_1 \otimes x_2   & \mapsto x_1 w(x_2),
\end{align*}
where $x_1 \otimes x_2 \in \ZZ[\tX] \otimes_{\ZZ[\tX]^{W_0}} \ZZ[\tX] \simeq K^{\tG}(\cb \times \cb)$.

\begin{prop}\label{criterion}
Let $l \in K^{\tG}(\cb \times \cb)$, then $l \in K^G(\cb \times \cb)$ if and only if $Lj_w^*(l) \in K^G(Y_w)$ for any $w \in W_0$.
\end{prop}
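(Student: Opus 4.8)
The plan is to prove both implications by analyzing the restriction maps to the strata $Y_w$ through the filtration $F_1 \subset F_2 \subset \cdots \subset F_r = \cb \times \cb$ introduced in \S\ref{strata}. The forward direction is immediate: if $l \in K^G(\cb \times \cb)$, then its image $i(l)$ in $K^{\tG}(\cb \times \cb)$ restricts along the $G$-equivariant inclusion $j_w$ to an element lying in the image of $K^G(Y_w) \to K^{\tG}(Y_w)$, because restriction commutes with the change-of-group map $i$; so $Lj_w^*(l) \in K^G(Y_w)$ for every $w \in W_0$. (Here I am using the injectivity of $K^G(Y_w) \to K^{\tG}(Y_w)$, established in \S\ref{strata}, to regard $K^G(Y_w)$ as a subgroup of $K^{\tG}(Y_w)$.)

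For the converse, suppose $l \in K^{\tG}(\cb \times \cb)$ satisfies $Lj_w^*(l) \in K^G(Y_w)$ for all $w$. I would argue by downward induction on $i$ that the restriction of $l$ to $K^{\tG}(F_i)$ lies in the image of $K^G(F_i)$, using the commutative diagram of short exact sequences of $K$-groups from \S\ref{strata}. The base case $i = r$ is the desired statement, so really the induction runs from $i=1$ upward: one first checks that the restriction to $K^{\tG}(F_1) = K^{\tG}(Y_{u_1})$ comes from $K^G(F_1) = K^G(Y_{u_1})$, which is exactly the hypothesis for $w = u_1$. For the inductive step, given that the image of $l$ in $K^{\tG}(F_{i-1})$ lifts to some $l_{i-1} \in K^G(F_{i-1})$, one uses that the image of $l$ in $K^{\tG}(Y_{u_i})$ equals $Lj_{u_i}^*(l)$, which by hypothesis lies in $K^G(Y_{u_i})$; a diagram chase in

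\begin{equation*}
\xymatrix{
0 \ar[r] & K^G(F_{i-1}) \ar[d] \ar[r] & K^G(F_i) \ar[d] \ar[r] & K^G(Y_{u_i}) \ar[d] \ar[r] & 0\\
0 \ar[r] & K^{\tG}(F_{i-1}) \ar[r] & K^{\tG}(F_i) \ar[r] & K^{\tG}(Y_{u_i}) \ar[r] & 0 }
\end{equation*}

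then produces a lift $l_i \in K^G(F_i)$ of the image of $l$ in $K^{\tG}(F_i)$, provided the two exact rows splice compatibly. The subtle point is that a naive diagram chase gives a lift of the image of $l$ only up to an element of $K^{\tG}(F_{i-1})$ that need not itself come from $K^G(F_{i-1})$; one must correct $l_{i-1}$ by that discrepancy, which requires knowing the discrepancy lies in the image of $K^G(F_{i-1})$ — and that is where one invokes the inductive hypothesis together with injectivity of all the vertical maps (so that equality can be checked after passing to $\tG$-equivariant groups).

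The main obstacle I anticipate is precisely this bookkeeping in the inductive step: the vertical arrows $K^G(F_j) \to K^{\tG}(F_j)$ are injective but not surjective, so one cannot simply lift stratum-by-stratum; one has to track the image of $l$ coherently through the whole filtration and use injectivity repeatedly to deduce that elements agreeing after extension of the group already agree in the $G$-equivariant group. Once the bookkeeping is set up correctly, the argument is formal. A secondary check is that the identification $K^{\tG}(Y_w) \simeq \ZZ[\tX]$ and the explicit formula $Lj_w^*(x_1 \otimes x_2) = x_1 w(x_2)$ are compatible with the analogous data for $K^G(Y_w) \simeq \ZZ[X]$, so that the condition ``$Lj_w^*(l) \in K^G(Y_w)$'' is literally the condition ``$x_1 w(x_2) \in \ZZ[X]$'' under these identifications; this compatibility follows from the second commutative diagram in \S\ref{strata} and is what makes the criterion usable in the proof of parts (b) and (c).
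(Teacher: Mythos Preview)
Your forward direction is fine, but the converse has a real gap. In the inductive step, after lifting $Lj_{u_i}^*(l)$ to some $m\in K^G(F_i)$, the difference $Lj_i^*(l)-i(m)$ is the pushforward $\iota_*(n)$ of some $n\in K^{\tG}(F_{i-1})$, and you must show $n\in K^G(F_{i-1})$. Your inductive hypothesis concerns $Lj_{i-1}^*(l)$, but $n$ is \emph{not} $Lj_{i-1}^*(l)$: they are related only through $L\iota^*\iota_*(n)=Lj_{i-1}^*(l)-L\iota^*(i(m))$, and recovering $n$ from $L\iota^*\iota_*(n)$ would require a self-intersection formula for $\iota\colon F_{i-1}\hookrightarrow F_i$. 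That immersion is not regular in general (the $F_j$ are typically singular and not equidimensional), so no such formula is available. Injectivity of the vertical arrows cannot repair this: the problem is not comparing two $K^G$-classes but showing a given $K^{\tG}$-class descends.

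The paper's proof supplies exactly the missing ingredient. It first passes to the torus side via $K^{\tG}(\cb\times\cb)\simeq K^{\tT}(\cb)$, where each cell $X_{u_i}$ is a linear subspace of an explicit $\tT$-stable affine open $X^{u_i}$, and it runs the induction on $\supp(l)$ rather than on ``images of $l$ in $K(F_i)$''. For $l$ supported on $X_i$, the key step is the Euler-class identity
\[
Lk_{u_i}^*(l)=\prod_{\alpha\in R^+,\ u_i^{-1}(\alpha)>0}(1-\alpha^{-1})\cdot l|_{X_{u_i}},
\]
valid because $X_{u_i}\hookrightarrow X^{u_i}$ is a regular immersion of $T$-representations (Prop.~5.4.10 of \cite{CG}). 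Since this Euler factor lies in $\ZZ[X]$, the hypothesis $Lk_{u_i}^*(l)\in\ZZ[X]$ forces the \emph{open} restriction $l|_{X_{u_i}}\in\ZZ[X]$ by \S\ref{fact}; one then subtracts a $K^T$-extension $l'$ with the same open restriction, so that $l-l'$ is supported on $X_{i-1}$ and still satisfies all hypotheses, and induction closes. The substantive point your diagram chase cannot see is that the derived pullback $Lj_w^*$ in the hypothesis differs from the open restriction appearing in the filtration sequence precisely by this Euler class.
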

\begin{proof}
Denote by $k_w: X_w \hookrightarrow \cb$, $k_i: X_i \hookrightarrow \cb$ and $j_i: F_i \hookrightarrow \cb \times \cb$ the natural immersions. Then we have
\begin{equation*}
\xymatrix{
K^{\tG}(\cb \times \cb) \ar[r]^{Lj_i^*} \ar[d]_\wr & K^{\tG}(F_i) \ar[r]^{\res} \ar[d]_\wr & K^{\tG}(Y_{u_i}) \ar[d]_\wr \\
K^{\tT}(\cb) \ar[r]^{Lk_i^*} & K^{\tT}(X_i) \ar[r]^{\res} & K^{\tT}(X_{u_i}) }
\end{equation*}

Hence our proposition is equivalent to the following statement\\

{\it For any $l \in K^{\tT}(\cb)$, $l \in K^T(\cb)$ if and only if $Lk_w^*(l) \in K^T(X_w)$ for any $w \in W_0$. }\\

The "only if" part is obviously. We show the "if" part. Note that the support $\supp(l)$ of $l$ belongs to some $X_i$, we argue by induction on $i$. If $\supp(l)=\emptyset$, that is, $l=0$, then the statement follows trivially. Suppose the proposition holds for any element in $K^{\tT}(\cb)$ whose support belongs to $X_{i-1}$. We show it also holds for $z \in K^{\tT}(\cb)$ with $\supp(l) \subset X_i$.

Since we have the following $\tT$-equivariant morphism
\begin{equation*}
\xymatrix{
X_{u_i} \ar[r] \ar[d]_\wr & X^{u_i} \ar[d]_\wr \\
\bigoplus_{\a \in R^-, u_i\i(\a)>0}\CC_{\a} \ar[r] & \bigoplus_{\b \in R^+} \CC_{u_i(\b)}, }
\end{equation*}
where $\CC_\a$ denotes the one dimensional vector space $\CC$ on which $\tT$ acts via the character $\a$. By Proposition 5.4.10 in \cite{CG},
$$Lk_{u_i}^*(l)=\prod_{{\a \in R^+, u_i\i(\a)>0}}(1- \a \i)l|_{X_{u_i}} \in K^T(X_{u_i}).$$
Since $\prod_{{\a \in R^+, u_i\i(\a)>0}}(1- \a \i) \in K^T(X_{u_i})$, then $l|_{X_{u_i}} \in K^T(X_{u_i})$ by \ref{fact}. Since $X_{u_i}$ is a $\tT$-stable open subset of $X_i$, there exists $l' \in K^T(X_i)$ such that $l'|_{X_{u_i}}=l|_{X_{u_i}}$. Then $\supp(l-l') \subset X_{i-1}$. Using induction hypothesis, we have $l-l' \in K^T(\cb)$. Hence $l=(l-l')+l' \in K^T(\cb)$ and the proof is finished.
\end{proof}

\begin{cor}\label{cor}
Let $z=f \i w_0 \chi f'$ with $f,f' \in \Sigma$ and $\chi \in \tX^+$. Then $z \in W$ if and only if $\sigma(z)=V(\chi) y_f \otimes x_{f'} \in K^G(\cb \times \cb)$.
\end{cor}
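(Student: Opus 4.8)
The plan is to combine the pairing criterion of Proposition~\ref{criterion} with the explicit formula for $Lj_w^*$ and the coset bookkeeping of \S\ref{fact}. By Proposition~\ref{criterion}, $\sigma(z)=V(\chi)y_f\otimes x_{f'}\in K^G(\cb\times\cb)$ if and only if $Lj_w^*\big(V(\chi)y_f\otimes x_{f'}\big)=V(\chi)\,y_f\,w(x_{f'})\in K^G(Y_w)$ for every $w\in W_0$. Now $K^G(Y_w)\simeq K^G(\cb)\simeq\ZZ[X]$ sits inside $K^{\tG}(Y_w)\simeq\ZZ[\tX]$ precisely as the subgroup $\ZZ[X]$, so the condition says exactly that $V(\chi)\,y_f\,w(x_{f'})\in\ZZ[X]$ for all $w\in W_0$. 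The element $V(\chi)$ lies in $\ZZ[\tX]^{W_0}$ and, more to the point, lies in $\ZZ[\chi X]$ since its weights are all $\equiv\chi\pmod X$. By Lemma~\ref{lem}, $y_f\in\ZZ[x_f\i X]$, and $w(x_{f'})\in\ZZ[x_{f'}X]$ because applying $w\in W_0$ permutes the translation part $X$ and fixes the coset. Hence $V(\chi)\,y_f\,w(x_{f'})\in\ZZ[\chi\,x_f\i\,x_{f'}\,X]$ for every $w$, and this lies in $\ZZ[X]$ if and only if $\chi\,x_f\i\,x_{f'}\in X$, i.e.\ $\chi\,x_f\i\,x_{f'}X=X$ in $\Om=\tX/X$.

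The next step is to identify that coset condition with the condition $z=f\i w_0\chi f'\in W$. Writing $f=ux_u$ and $f'=u'x_{u'}$ with $u,u'\in W_0$, one checks that the image of $z$ in $\Om=\tW/W$ depends only on the translation parts, since the semidirect product structure $W=X\rtimes W_0$, $\tW=\tX\rtimes W_0$ gives a canonical isomorphism $\tW/W\simeq\tX/X$ compatible with multiplication. Tracking the $\tX$-component of $z=f\i w_0\chi f'$ through the relations in $\tW$, its class in $\tX/X$ equals the class of $x_f\i\,\chi\,x_{f'}$ (the Weyl group elements act trivially on $\Om$ because $W_0$ preserves $X$). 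Therefore $z\in W\iff x_f\i\chi x_{f'}\in X\iff \chi x_f\i x_{f'}\in X$, which is exactly the criterion obtained in the previous paragraph.

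Putting these together proves the corollary: $z\in W$ iff $\chi x_f\i x_{f'}X=X$ in $\Om$ iff $V(\chi)y_f\,w(x_{f'})\in\ZZ[X]$ for all $w\in W_0$ iff $\sigma(z)\in K^G(\cb\times\cb)$. I expect the main obstacle to be the bookkeeping in the middle paragraph: one must verify carefully that $V(\chi)\in\ZZ[\chi X]$ (clear from weight considerations, since every weight of the irreducible module of highest weight $\chi$ differs from $\chi$ by a sum of roots, and $R\subset X$), that passing to the $w$-translate does not change the coset, and that the product of elements in $\ZZ[\l X]$ and $\ZZ[\mu X]$ lands in $\ZZ[\l\mu X]$ with the converse divisibility statement of \S\ref{fact}---all of which are elementary but need to be assembled in the right order. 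The identification of the $\tX/X$-class of $f\i w_0\chi f'$ also requires a short computation in $\tW$, but no real difficulty beyond care with the semidirect-product conventions.
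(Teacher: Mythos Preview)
Your argument is correct and follows essentially the same route as the paper: use Proposition~\ref{criterion} to reduce membership in $K^G(\cb\times\cb)$ to the condition $V(\chi)\,y_f\,w(x_{f'})\in\ZZ[X]$, then invoke Lemma~\ref{lem} and the coset calculus of \S\ref{fact} to rewrite this as $x_f^{-1}\chi\,x_{f'}\in X$, and finally identify that with $z\in W$. The paper's proof is the same argument compressed into three sentences; you have simply made explicit the steps it leaves tacit (that $w(x_{f'})$ has the same $\Om$-class as $x_{f'}$, that $V(\chi)\in\ZZ[\chi X]$ because all weights differ from $\chi$ by root-lattice elements, and the semidirect-product computation showing the $\tX/X$-class of $f^{-1}w_0\chi f'$ is $x_f^{-1}\chi\,x_{f'}X$).
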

\begin{proof}
Obviously $z \in W$ if and only if $x_f \i \chi x_f \in X$. On the other hand, $V(\chi) \in \ZZ[\chi X]$. By Lemma \ref{lem}, $y_f \in \ZZ[x_f \i X]$. Then by Proposition \ref{criterion}, $V(\chi) y_f \otimes x_{f'} \in K^G(\cb \times \cb)$ if and only if $V(\chi) y_f x_{f'} \in \ZZ[X]$, which is equivalent to $x_f \i \chi x_f \in X$.
\end{proof}

\begin{proof}[Proof of part (b) and (c) of Theorem \ref{main}]
By Corollary \ref{cor}, $\sigma(J_0) \subset K^G(\cb \times \cb)$. It remains to show that $K^G(\cb \times \cb) \subset \sigma(J_0)$. Let $l \in K^G(\cb \times \cb)$. Due to $(*)$, we can assume that $$l=\sum_{f,f' \in \Sigma}a_{f,f'}V(\chi_{f,f'})y_f \otimes x_{f'}$$ with $a_{f,f'} \in \ZZ$, $\chi_{f,f'} \in \tX^+$. Since $y_f \otimes x_f = \sigma(t_{f \i w_0 f}) \in K^G(\cb \times \cb)$ for each $f \in \Sigma$, we have
$$(y_f \otimes x_f) \ast l \ast (y_{f'} \otimes x_{f'})=a_{f,f'} V(\chi_{f,f'}) y_f \otimes x_{f'} \in K^G(\cb \times \cb).$$ Hence by Corollary \ref{cor}, $f \i w_0 \chi_{f,f'} f' \in J_0$ whenever $a_{f,f'} \neq 0$. Hence $l=\sum_{f,f'}a_{f,f'}\sigma(t_{f \i w_0 \chi_{f,f'} f'}) \in \sigma(J_0)$.
\end{proof}

\section{Some results on $K^G(\mathcal{P}\times\mathcal{P})$}
\subsection{}
Let $I \subset \D $ be a subset and $P$ the parabolic subgroup of type $I$ containing $B$. Define $\cp=G/P$ be the variety of all parabolic subgroups of type $I$.

Let $\cd$ be the set of double cosets of $W_0$ with respect to $W_I\subset W_0$. Here $W_I$ is the parabolic subgroup generated by $I$. For each $w \in W_0$, define
$$Z_{\bar{w}}=\{(P,P') \in \cp \times \cp \ |\  (P,P')\text{ is conjugate to } (P,{}^w P)\},$$
where $\bar{w}$ denotes the double coset $W_I w W_I$. Then we have
$$\cp \times \cp=\coprod_{d \in \cd}Z_{d}.$$

For any double coset $d \in \cd$, there is a unique element $u_d \in d$ such that $u_d$ is the smallest in $d$ under the Bruhat order. Let $d, d'\in \cd$, we say $d \geq d'$ if $u_d \geq u_{d'}$ under the Bruhat order.

\begin{lem}
With notations as above, then we have $d \geq d'$ if and only if $\bar Z_d \supset Z_{d'}$.
\end{lem}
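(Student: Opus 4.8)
The statement to prove is: for double cosets $d, d' \in \cd$, we have $d \ge d'$ if and only if $\bar Z_d \supset Z_{d'}$. The plan is to reduce everything to the Bruhat order combinatorics of the orbits $Y_w \subset \cb \times \cb$ from \ref{strata}, and transport the closure relations along the natural projection $\pi \times \pi \colon \cb \times \cb \to \cp \times \cp$. Recall that the $G$-orbits on $\cb \times \cb$ are the $Y_w$, $w \in W_0$, with $\bar Y_w = \coprod_{v \le w} Y_v$ (the classical Bruhat order description of orbit closures), and the $G$-orbits on $\cp \times \cp$ are the $Z_d$, $d \in \cd$.

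First I would record the precise relation between the two stratifications: $(\pi \times \pi)(Y_w) = Z_{\bar w}$, and more importantly the fiber-level statement that $(\pi \times \pi)^{-1}(Z_d)$ is the union of those $Y_w$ with $W_I w W_I = d$; this follows from the Bruhat decomposition of $P \backslash G / P$ versus $B \backslash G / B$. Since $\pi \times \pi$ is a proper morphism (being a fibration with flag-variety fibers), it sends closed sets to closed sets and one has $\overline{(\pi \times \pi)(S)} = (\pi \times \pi)(\bar S)$ for any $S$. Applying this with $S = Y_{u_d}$ (where $u_d$ is the Bruhat-minimal element of $d$) gives
\[
\bar Z_d = \overline{(\pi \times \pi)(Y_{u_d})} = (\pi \times \pi)\bigl(\bar Y_{u_d}\bigr) = (\pi \times \pi)\Bigl(\coprod_{v \le u_d} Y_v\Bigr) = \coprod_{\{d'' \,:\, u_{d''} \le u_d\ \text{for some element}\}} Z_{d''}.
\]
So the key combinatorial input is: $Z_{d'} \subset \bar Z_d$ iff there exists $v \in d'$ with $v \le u_d$, and I must show this last condition is equivalent to $u_{d'} \le u_d$. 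One direction is trivial ($u_{d'} \in d'$). For the converse, if some $v \in d'$ satisfies $v \le u_d$, then since $u_{d'}$ is the Bruhat-minimum of the coset $d'$ one has $u_{d'} \le v$, hence $u_{d'} \le u_d$ by transitivity. This closes the argument.

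The step I expect to be the main obstacle — or at least the one requiring the most care — is establishing that $\bar Y_{u_d}$ contains no $Y_v$ outside the preimage $(\pi \times \pi)^{-1}\bigl(\bar Z_d\bigr)$ in a way compatible with the double-coset bookkeeping, i.e. that the image under $\pi \times \pi$ genuinely detects the $\ge$ relation on $\cd$ and not something coarser. Concretely, one must know that $u_d$, the minimal element of the double coset $d$, is also minimal for the \emph{Bruhat} order among all representatives (a standard fact about minimal-length double coset representatives: $u_d$ is the unique element of $d$ of minimal length, and it is Bruhat-below every element of $d$), and that taking images of orbit closures under $\pi \times \pi$ does not create spurious inclusions — this is where properness of $\pi \times \pi$ and the identification of $Z_d$ as $(\pi\times\pi)(Y_w)$ for \emph{any} $w \in d$ are used. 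Once these facts are in hand the proof is the short chain of equalities above; I would state the minimal-length double coset fact explicitly (citing the standard reference for Coxeter groups) rather than reprove it.
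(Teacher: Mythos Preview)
Your proposal is correct and follows essentially the same route as the paper: both arguments push the Bruhat stratification of $\cb\times\cb$ down along the proper projection $p\colon\cb\times\cb\to\cp\times\cp$, using $p(Y_{u_d})=Z_d$ and $p(\bar Y_{u_d})=\bar Z_d$. Your write-up is in fact more explicit than the paper's for the ``if'' direction---the paper just says it ``follows from the fact that $p$ is projective,'' whereas you spell out the combinatorial step that $Z_{d'}\subset\bar Z_d$ forces some $v\in d'$ with $v\le u_d$, and then $u_{d'}\le v\le u_d$ since $u_{d'}$ is Bruhat-minimal in its double coset.
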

\begin{proof}
Consider the natural projections $Y_{u_d}\rightarrow Z_d$ and $Y_{u_{d'}}\rightarrow Z_{d'}$, which are restrictions of the natural projection $p:\cb \times \cb \rightarrow \cp \times \cp$ to $Y_{u_d}$ and $Y_{u_{d'}}$ respectively. Since $u_d \geq u_{d'}$, we have $\bar Y_{u_d} \supset Y_{u_{d'}}$. Hence $Z_{d'} \subset p(\bar{Y}_{u_{d'}})\subset p(\bar{Y}_{u_d})=\bar{Z_d}$. The ``if part'' follows from the fact that the morphism $p$ above is projective.
\end{proof}

\begin{prop}\label{exact}
Let $d \in \cd$. We have the following short exact sequence:
$$0 \rightarrow K^G(Z_{<d})\rightarrow K^G(\bar Z_d)\rightarrow K^G(Z_d)\rightarrow 0.$$ Here $Z_{< d}= \bar Z_d - Z_d$.
\end{prop}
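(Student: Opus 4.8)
The plan is to mimic the argument already used in Section~2 for the case $\cp = \cb$, which rests on two facts: the localization/excision long exact sequence for equivariant $K$-theory applied to the stratification by $G$-orbits, together with an argument showing the relevant connecting maps vanish so that the long exact sequence breaks into short exact sequences. First I would fix the double coset $d$ and recall that $Z_{<d} = \bar Z_d - Z_d$ is, by the previous Lemma, a closed $G$-stable subvariety of $\bar Z_d$ with open complement $Z_d$. The general excision sequence from Chapter~5 of \cite{CG} then gives a long exact sequence
\begin{equation*}
\cdots \rightarrow K^G_1(Z_d) \rightarrow K^G(Z_{<d}) \rightarrow K^G(\bar Z_d) \rightarrow K^G(Z_d) \rightarrow 0,
\end{equation*}
so the only thing to prove is that the map $K^G_1(Z_d) \rightarrow K^G(Z_{<d})$ is zero, equivalently that $K^G(Z_{<d}) \rightarrow K^G(\bar Z_d)$ is injective.

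The key observation is that $Z_d$ is a $G$-equivariant affine (in fact vector) bundle over $\cp$: indeed the projection $p_1 \colon Z_d \rightarrow \cp$ to the first factor identifies $Z_d$ with a homogeneous bundle $G \times_P (\text{something})$, exactly as $p_1 \colon Y_w \rightarrow \cb$ was an affine bundle of rank $\ell(w)$ in \S\ref{strata}. Hence by homotopy invariance of equivariant $K$-theory for affine bundles, $K^G_i(Z_d) \cong K^G_i(\cp) \cong K^G_i(G/P)$. Now $K^G(G/P) \cong K^P(\pt) \cong R_P = R_L$ where $L$ is a Levi of $P$, and more generally $K^G_i(G/P) \cong K^L_i(\pt)$; the higher group $K^G_1(G/P) \cong K_1^L(\pt)$ need not vanish in general, so I cannot simply kill the source for dimension reasons. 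Instead I would argue that the connecting map is zero by an induction on $d$ (ordering the double cosets as $d_1, d_2, \dots$ with $d_i \not\geq d_j$ for $j < i$, in analogy with the numbering $u_1, \dots, u_r$ of $W_0$), reducing to showing that at each step the restriction $K^G(\bar Z_d) \rightarrow K^G(Z_d)$ is split surjective, or at least that each $K^G(F_i) \rightarrow K^G(F_{i-1})$ behaves well; the cleanest route is to show that the filtration of $K^G(\cp\times\cp)$ by the $K^G(\bar Z_{d})$ has associated graded pieces $K^G(Z_d)$, which follows once every connecting homomorphism dies.

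To kill the connecting homomorphisms I would use the convolution/module structure: $K^G(\cp\times\cp)$ is a module over $R_G$ and the strata $Z_d$ carry compatible actions, and one can produce explicit classes in $K^G(\bar Z_d)$ restricting to a generating set of $K^G(Z_d) \cong R_L$ by pulling back line bundles $L(\chi)$ along the two projections $\cp\times\cp \to \cp$ and restricting. Concretely, any class on $Z_d$ of the form (pullback of a line bundle from $\cp$ via $p_1$) extends to all of $\cp\times\cp$, so the restriction map is surjective already on the level of such generators, which forces the long exact sequence to split into the asserted short exact sequences. I expect the main obstacle to be precisely this surjectivity-of-restriction step: one must check that $K^G(Z_d)$ is generated as a ring (or at least as a group, after tensoring appropriately) by classes that visibly extend over $\bar Z_d$, which requires identifying $K^G(Z_d)$ explicitly with $R_L$ and matching up generators with global line bundles on $\cp\times\cp$; the parabolic (rather than Borel) setting makes the bookkeeping of the Levi $L$ and its representation ring the delicate point, whereas in \S\ref{strata} the analogous statement was immediate because $K^G(\cb)\cong\ZZ[X]$ was generated by line bundles by construction.
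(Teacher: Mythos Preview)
Your argument contains a genuine error at the ``key observation'': the projection $p_1\colon Z_d\to\cp$ is \emph{not} an affine bundle when $P\neq B$. Its fibre over $P\in\cp$ is the $P$-orbit $Pu_dP/P\cong P/(P\cap{}^{u_d}P)$, which for a proper parabolic is typically a non-affine quasi-projective variety. For instance, take $G=GL_4$, $\cp=Gr(2,4)$, and $d$ the orbit of pairs $(V,W)$ with $\dim(V\cap W)=1$; the fibre over a fixed $V$ is an $\mathbb{A}^2$-bundle over $\mathbb{P}(V)\cong\mathbb{P}^1$ (via $W\mapsto V\cap W$), which has Euler characteristic $2$ and is not an affine space. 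Consequently your identification $K^G(Z_d)\cong R_L$ is wrong: since $Z_d\cong G/(P\cap{}^{u_d}P)$ one has $K^G(Z_d)\cong R_{P\cap{}^{u_d}P}\cong R_{L_d}$ where $L_d$ is the Levi of $P\cap{}^{u_d}P$, a group strictly smaller than $L$ in general. Your proposed generators---pullbacks of line bundles along $p_1$---only produce characters of $P$, i.e.\ elements of $\ZZ[X]^{W_I}$, and these do not span $R_{L_d}$; so the surjectivity-of-restriction step fails as stated.

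The paper avoids this by passing from $G$ to $T$: one uses the injection $K^G(Z)\hookrightarrow K^T(Z)$ to reduce the desired injectivity of $K^G(Z_{<d})\to K^G(\bar Z_d)$ to the torus case. There one invokes a lemma of Lusztig \cite[Lemma~1.6]{L2} saying that the long exact sequence for a $T$-stable closed/open pair is short exact provided $K^T$ of the open piece is a free $R_T$-module. Freeness of $K^T(Z_d)$ is then proved not by treating $Z_d$ as one $G$-orbit but by further stratifying it: writing $q\colon Z_d\to\cp$ and pulling back the Bruhat cells $BxP/P$ gives $T$-stable pieces $Z_d^x$ with $K^T(Z_d^x)\cong K^T(F_x)$, where $F_x={}^xP/({}^xP\cap{}^{xu_d}P)$ is a partial flag variety admitting a paving by $T$-linear affine cells. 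The parabolic difficulty you anticipated is thus handled by working $T$-equivariantly and using a finer cell decomposition, rather than by finding global extensions of classes on $Z_d$.
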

\begin{proof}
It suffices to prove the injection $K^G(Z_{<d})\hookrightarrow K^G(\bar Z_d)$. Note that we have a natural injective morphism $K^G(Z)\hookrightarrow K^T(Z)$ for $Z=Z_{<d}$ or $Z=\bar Z_d$, where $T \subset G$ is a maximal torus. Hence it suffices to prove the result for torus $T$. By Lemma 1.6 in \cite{L2}, we just have to show that $K^T(Z_d)$ is a free $R_T$-module for all $d \in \cd$. Let's consider the projection $q: Z_d \rightarrow \cp$ given by $(P,P')\mapsto P$. Let $x \in \{w \in W_0 \ |\  w \text{ is of minimal length among } w W_I\}$. Define $Z_d^x=q \i (B x P/P)$ which is $T$-stable. Thus it suffices to show $K^T(Z_d^x)$ is a free $R_T$-module. Note that $Z_d^x=B_x \backslash (B \times F_x)$, where $B_x =\{b \in B; {}^{bx} P={}^x P\}$ and $F_x=q \i ({}^x P)$. Then we have
$$K^T(Z_d^x)=K^T(B_x \backslash (B \times F_x))=K^{B}(B_x \backslash (B \times F_x))=K^T(F_x).$$
Since $F_x={}^x P/({}^x P \cap {}^{xu_d} P)$, and ${}^x P /({}^x P \cap {}^{xu_d} P)$ contains a Borel subgroup of a Levy subgroup of ${}^x P$ containing $T$, it admits a partition which are $T$-vector spaces. Hence the proposition follows.
\end{proof}

\subsection{}\label{rmk}
Let $w \in d = W_I w W_I$. Choose Borel subgroups $\hat{B}_w \subset P$ and $\hat{B}_w'\subset {}^w P$ such that $\hat{B}_w \cap \hat{B}_w'$ is a Borel subgroup of $P \cap {}^w P$. Assume $(\hat{B}_w, \hat{B}_w') \in Y_u$ for some $u \in d$. Consider the natural projection $p|_{Y_u}:Y_u \rightarrow Z_{\bar{w}}$. It is easy to see $p^{-1}(P, {}^w P)=P \cap {}^w P / (\hat{B}_w \cap \hat{B}_w')=\cb_{P \cap {}^wP}$. Here $\cb_{P \cap {}^w P}$ denotes the flag variety of $P \cap {}^w P$. Hence $p|_{Y_{u}}$ is a projective morphism. So $u = u_d$ is the minimal lehgth element in the double coset $d$.

Define
\begin{align*}
Rp_*: K^G(Y_{u_d})&\longrightarrow K^G(Z_{\bar{w}})\\
[\cf]&\mapsto \sum_i(-1)^i[R^ip_* \cf].
\end{align*}

Let $\chi \in X=\Hom(T,\CC^*)$. Denote by $\th_\chi$ the $G$-equivariant line bundle over $Y_{u_d}$ such that $T$ acts on the fiber of $\th_\chi$ over $(\hat{B}_w,\hat{B}_w') \in Y_{u_d}$ via $\chi$.

\begin{prop}\label{sur}
With notations in \ref{rmk}. The morphism ${R(p|_{Y_{u_d}})}_*: K^G(Y_{u_d})\longrightarrow K^G(Z_{\bar{w}})$ defined above is surjective.
\end{prop}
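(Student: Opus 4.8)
The plan is to exhibit, for an arbitrary $G$-equivariant coherent sheaf $\cf$ on $Z_{\bar w}$, an explicit preimage under $R(p|_{Y_{u_d}})_*$ built from line bundles of the form $\theta_\chi$ tensored with pullbacks from $\cp$. The geometric input is the identification from \ref{rmk}: $p|_{Y_{u_d}}\colon Y_{u_d}\to Z_{\bar w}$ is a fibration whose fibre over $(P,{}^wP)$ is the flag variety $\cb_{P\cap{}^wP}$ of the connected reductive group $P\cap{}^wP$. So the situation is, fibrewise, the classical statement that the structure map of a flag bundle induces a split surjection on equivariant $K$-theory, with a section given by the Borel--Weil/Bott description of $K$-theory of a flag variety as generated by line bundles.

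First I would reduce to the torus, exactly as in Proposition \ref{exact}: since $K^G(Z)\hookrightarrow K^T(Z)$ compatibly with pushforward (and pullback), and since one can even reduce further using the projection $q\colon Z_{\bar w}\to\cp$ and the slices $Z_{\bar w}^x=q^{-1}(BxP/P)$ to the computation on a single fibre $F_x={}^xP/({}^xP\cap{}^{xu_d}P)$, the claim becomes a statement about $K^T$ of a flag variety of the Levi-type group $L={}^xP\cap{}^{xu_d}P$ acting via line bundles pulled back along $Y_{u_d}\to Z_{\bar w}$. Concretely, $K^T(F_x)\simeq R_T\otimes_{R_L}R_T$-type description and the fibre of $Y_{u_d}$ is the full flag variety of $L$; the pushforward along the flag bundle is surjective because $K^T(\cb_L)$ is generated as an $R_T$-module by classes of line bundles $[L(\mu)]$, each of which is the pushforward of $\theta_{w_0^L\mu}$ (or an appropriate twist) together with the projection formula. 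Next I would assemble these fibrewise sections into a global one: the classes $\theta_\chi$ for $\chi\in X$ generate $K^G(Y_{u_d})$ over the image of $K^G(Z_{\bar w})$ (this is again an affine-bundle / flag-bundle generation statement, using that $p_1\colon Y_{u_d}\to\cb$ is an affine bundle, so $K^G(Y_{u_d})\simeq K^G(\cb)\simeq\ZZ[X]$, matched against $K^G(Z_{\bar w})$), and I would show that $Rp_*$ hits each needed generator of $K^G(Z_{\bar w})$ by pushing forward a suitable $\theta_\chi$, invoking the projection formula $Rp_*(\theta_\chi\otimes p^*\eta)=Rp_*(\theta_\chi)\otimes\eta$ and base change to reduce to the fibre computation just described.

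The main obstacle I anticipate is the bookkeeping of characters: identifying precisely which $\theta_\chi$ pushes forward to the class $1\in K^G(Z_{\bar w})$ (equivalently $\cO_{Z_{\bar w}}$), i.e.\ computing $R(p|_{Y_{u_d}})_*\theta_\chi$ for the relevant $\chi$ and verifying that by varying $\chi$ over $X$ one obtains a generating set, rather than merely a submodule, of $K^G(Z_{\bar w})$. This amounts to a Bott-type vanishing/computation on the fibres $\cb_{P\cap{}^wP}$: for a dominant-enough choice one gets $Rp_*\theta_\chi$ concentrated in degree $0$ and equal to an induced bundle whose class is a unit times $\cO$, and for the remaining generators one uses the $R_T$-module structure. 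The reduction steps (to $T$, to a fibre $F_x$) are routine given Proposition \ref{exact}; the surjectivity of flag-bundle pushforward on equivariant $K$-theory is standard (e.g.\ Chapter 6 of \cite{CG}), so once the character computation is pinned down the proposition follows by combining the projection formula with the exact sequences of \ref{strata} and Proposition \ref{exact}.
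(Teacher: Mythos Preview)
Your proposal contains the essential germ of the argument---push forward the line bundles $\theta_\chi$ and invoke Borel--Weil on the fibres---but you have wrapped it in machinery that is both unnecessary and, in one place, not obviously valid. The reduction to $T$ ``exactly as in Proposition \ref{exact}'' is suspect: there the torus reduction was used to prove \emph{injectivity} of a map, and for that a commutative square with injective vertical arrows suffices. For \emph{surjectivity} the same square tells you nothing: surjectivity of $K^T(Y_{u_d})\to K^T(Z_{\bar w})$ does not imply surjectivity of $K^G(Y_{u_d})\to K^G(Z_{\bar w})$. The slicing by $Z_{\bar w}^x$ and the appeal to the projection formula are likewise detours.

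The paper's route is far shorter and stays entirely in $G$-equivariant $K$-theory. Since $Z_{\bar w}$ is a single $G$-orbit with stabiliser $P\cap{}^wP$, one has $K^G(Z_{\bar w})\cong K^{P\cap{}^wP}(\pt)=R_{P\cap{}^wP}$, so it is enough to show that the classes $[V_\chi]$ of irreducible $P\cap{}^wP$-modules lie in the image. Because $p|_{Y_{u_d}}$ is smooth and projective and $Z_{\bar w}$ is integral, cohomology and base change (Hartshorne, Corollary~12.9) identifies $R(p|_{Y_{u_d}})_*\theta_\chi$ fibrewise with $H^\bullet(\cb_{P\cap{}^wP},L(\chi))$; for $\chi$ dominant this is exactly $V_\chi$ by Borel--Weil. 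That already hits a generating set of $R_{P\cap{}^wP}$, and the proof is finished. In particular your ``main obstacle''---finding a $\theta_\chi$ mapping to $1$---is a red herring: you do not need a section, only generators, and the dominant $V_\chi$ provide them immediately.
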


\begin{proof}
Let's compute ${R(p|_{Y_{u_d}})}_*([\th_\chi])$. Note that $p$ is smooth and projective, and $Z_{\bar{w}}$ is integral (as a scheme). Hence by Corollary 12.9 of \cite{Har}, we have ${R^ip|_{Y_{u_d}}}_*(\th_\chi)$ is vector bundle over $Z_{\bar{w}}$ and $R^ip_*(\th_\chi)|_{(P,{}^wP)}=H^i(p \i (P, {}^w P),\th_\chi|_{p \i (P, {}^w P)})$ for all $i \geq 0$. Hence when $\chi$ is a dominant weight, $Rp_*(\th_\chi)|_{(P, {}^w P)}=V_\chi$, where $V_\chi$ is the irreducible $P \cap {}^w P$-module with highest weight $\chi$. Note that all $[V_\chi]$ with $\chi$ dominant generates $K^{P \cap {}^w P}(\pt)=K^G({Z_{\bar{w}}})$. Hence $Rp_*$ is surjective.
\end{proof}

\begin{cor}
The natural morphism $Rp_\ast: K^G(\mathcal{B}\times\mathcal{B})\rightarrow K^G(\mathcal{P}\times\mathcal{P})$ is surjective.
\end{cor}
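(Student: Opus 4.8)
The plan is a dévissage along the stratification $\cp \times \cp = \coprod_{d \in \cd} Z_d$, reducing surjectivity of $Rp_\ast$ to the stratumwise surjectivity already established in Proposition \ref{sur}. Call a subset $S \subseteq \cd$ \emph{downward closed} if $d' \le d \in S$ implies $d' \in S$, and for such $S$ put $Z_S = \coprod_{d \in S} Z_d$; by the closure relations among the $Z_d$ proved above, $Z_S$ is closed in $\cp \times \cp$, so $p\i(Z_S)$ is closed in $\cb \times \cb$. I would prove, by induction on $|S|$, that $Rp_\ast \colon K^G(p\i(Z_S)) \to K^G(Z_S)$ is surjective; taking $S = \cd$ gives the Corollary. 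The case $S = \emptyset$ is trivial, so fix $S \neq \emptyset$, choose a maximal $d \in S$, and set $S' = S \setminus \{d\}$, which is again downward closed, with $Z_d = Z_S \setminus Z_{S'}$ open in $Z_S$ and $p\i(Z_d)$ open in $p\i(Z_S)$.

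The inductive step uses the commutative diagram
\begin{equation*}
\xymatrix{
 & K^G(p\i(Z_{S'})) \ar[r] \ar[d]^{Rp_\ast} & K^G(p\i(Z_S)) \ar[r] \ar[d]^{Rp_\ast} & K^G(p\i(Z_d)) \ar[r] \ar[d]^{Rp_\ast} & 0 \\
0 \ar[r] & K^G(Z_{S'}) \ar[r] & K^G(Z_S) \ar[r] & K^G(Z_d) \ar[r] & 0 }
\end{equation*}
Here the top row is the localization sequence of the closed subset $p\i(Z_{S'}) \subset p\i(Z_S)$ with open complement $p\i(Z_d)$, which is exact on the right, and the bottom row is the analogous sequence for $Z_{S'} \subset Z_S$; the latter is moreover exact on the left because — as shown inside the proof of Proposition \ref{exact} — $K^T(Z_d)$ is a free $R_T$-module for every $d \in \cd$, so by Lemma 1.6 of \cite{L2} all connecting homomorphisms attached to the filtration by the $Z_S$ vanish. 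The left square commutes by functoriality of $Rp_\ast$ under the closed immersion $p\i(Z_{S'}) \hookrightarrow p\i(Z_S)$, the right square by flat base change along the open immersion $Z_d \hookrightarrow Z_S$. A diagram chase then reduces everything to surjectivity of the right vertical arrow: given $c \in K^G(Z_S)$, lift its image in $K^G(Z_d)$ through the right vertical map and then through the right-exact top row to some $\hat c \in K^G(p\i(Z_S))$; now $c - Rp_\ast(\hat c)$ dies in $K^G(Z_d)$, hence comes from $K^G(Z_{S'})$, hence — by the induction hypothesis for $S'$ — from $K^G(p\i(Z_{S'}))$, and adding its image in $K^G(p\i(Z_S))$ to $\hat c$ exhibits $c$ in the image of $Rp_\ast$.

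To finish I must check $Rp_\ast \colon K^G(p\i(Z_d)) \to K^G(Z_d)$ is surjective. Since $p$ is $G$-equivariant, $p\i(Z_d) = \coprod_{u \in d} Y_u$, and by \S\ref{rmk} the morphism $p|_{Y_{u_d}} \colon Y_{u_d} \to Z_d$ is projective; as $p$ is separated, the locally closed immersion $Y_{u_d} \hookrightarrow p\i(Z_d)$ is therefore proper, hence a closed immersion, so direct image gives a map $K^G(Y_{u_d}) \to K^G(p\i(Z_d))$ whose composite with $Rp_\ast$ is exactly $R(p|_{Y_{u_d}})_\ast$. By Proposition \ref{sur} this composite is surjective, so $Rp_\ast \colon K^G(p\i(Z_d)) \to K^G(Z_d)$ is surjective.

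The one step that needs genuine input is the left-exactness of the bottom row — i.e. that refining to the full filtration by strata introduces no nonzero connecting maps — which is precisely where the freeness of the $K^T(Z_d)$ over $R_T$ (established in the proof of Proposition \ref{exact}) together with Lemma 1.6 of \cite{L2} are used; everything else (compatibility of $Rp_\ast$ with restriction to open subsets and with direct image along closed immersions, and the closedness of $Y_{u_d}$ inside $p\i(Z_d)$) is formal.
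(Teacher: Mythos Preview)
Your proof is correct and follows essentially the same d\'evissage as the paper's own argument: induct along the stratification by the $Z_d$, use Proposition~\ref{sur} to hit the open stratum, and invoke the exactness from Proposition~\ref{exact} to push the remainder into a smaller closed union. The only cosmetic difference is in how the lift from $K^G(Y_{u_d})$ is produced: the paper extends $f_d$ to $\bar f_d \in K^G(\bar Y_{u_d})$ and then observes $p\i(Z_d)\cap \bar Y_{u_d}=Y_{u_d}$, whereas you instead argue directly that $Y_{u_d}$ is closed in $p\i(Z_d)$ and push forward along that closed immersion---both routes encode the same geometric fact that $u_d$ is minimal in its double coset.
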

\begin{proof}
Let $l\in K^G(\mathcal{P}\times\mathcal{P})$. We show that $l$ lies in the image of $Rp_\ast $ by induction on the dimension of the support of $\supp(l)$. If $\supp(l)=\emptyset$, hence $l=0$, it follows trivially. Now we assume that the statement holds for any $l'$ such that $\supp(l')\subset Z_{<d}$ for some $d \in \cd$ and any $l'\in K^G(\cp \times \cp)$. Assume $\supp(l)\subset Z_{\leq d}$, Let $l_d=i_d^*(l) \in K^G(Z_d)$, where $i_d: Z_d \hookrightarrow Z_{\leq d}$ be the natural open immersion. By Proposition \ref{sur}, there is $f_d \in K^G(Y_{u_d})$ such that ${R(p|_{Y_{u_d}})}_*(f_d)=l_d$. Now extend $f_d$ to some $\bar{f}_d \in K^G(\bar{Y}_{u_d})$. Thanks to 3.2, we have $p^{-1}(Z_d)\bigcap \bar{Y}_{u_d}=Y_{u_d}$. Hence $Rp_*(\bar{f}_d)|_{Z_d}=l_d$. Then the support $i_d^*(l-Rp_*(\bar{f}_d))=0$. By Proposition \ref{exact}, we have $\supp(l-Rp_*(\bar{f}_d))\subset Z_{<d}$. By induction hypothesis, the statement follows.
\end{proof}

\end{document}